\title{}
\date{}
\newtheorem{theorem}{Theorem}
\newtheorem{lemma}[theorem]{Lemma}
\newtheorem{stat*}{Statement}
\theoremstyle{remark}
\newtheorem{remark}{Remark}
\begin{document}

\ifpdf
\DeclareGraphicsExtensions{.pdf, .jpg, .tif}
\else
\DeclareGraphicsExtensions{.eps, .jpg}
\fi

\title {On the number of points in a lattice polytope}
\author{Arseniy Akopyan\thanks{University of Texas at Brownsville (USA), Institute of Systems Analysis (Russia).  Research supported by RFBR grants 08-01-00565-a and 10-01-00096-a} \and Makoto Tagami\thanks{Mathematical Institute, Tohoku University}}

\maketitle

\begin{abstract}
	In this article we will show that for every natural $d$ and $n>1$ there exists a natural number~$t$ such that for every $d$-dimensional simplicial complex~$\mathcal{T}$ with vertices in $\mathbb{Z}^d$, the number of lattice points in the $t^{\mathrm{th}}$ dilate of $\mathcal{T}$ is exactly $\chi(\mathcal{T})$ modulo~$n$, where $\chi(\mathcal{T})$ is the Euler characteristic of $\mathcal{T}$.
\end{abstract}

\section{Introduction}

This problem was given to one of the authors by Rom Pinchasi. He noticed that if we scale a segment with vertices in a lattice in two times, then the number of lattice points in the scaled segment will be odd.
For polygons with vertices in a two-dimensional lattice, the same fact follows from Pick's formula except that this polygon must be scaled in four times. We will show that the following theorem holds:

\begin{theorem}
	\label{thm:maintheorem}
	For any natural numbers $d$ and $n>1$ there exists a natural number~$t$ such that if $\mathcal{T}$ is any simplicial complex  in $\mathbb{R}^d$ with vertices in the integer lattice $\mathbb{Z}^d$ then the number of lattice points in the complex $t \mathcal{T}$ is equivalent to $\chi(\mathcal{T})$ modulo~$n$.
\end{theorem}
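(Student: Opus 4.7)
The plan is to exploit Ehrhart's theorem. For each (relatively) open lattice simplex $\sigma^\circ$ of dimension $k$, the count $L^\circ_\sigma(t) := |t\sigma^\circ \cap \mathbb{Z}^d|$ coincides on positive integers $t$ with a polynomial in $t$ of degree $k$ (taken with respect to the intrinsic lattice $\mathbb{Z}^d \cap \mathrm{aff}(\sigma)$). Since the open faces of $\mathcal{T}$ are pairwise disjoint and cover $|\mathcal{T}|$, summing gives
$$P_\mathcal{T}(t) := |t\mathcal{T} \cap \mathbb{Z}^d| = \sum_{\sigma \in \mathcal{T}} L^\circ_\sigma(t),$$
which is an integer-valued polynomial in $t$ of degree at most $d$.

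The first key step is to compute the constant term. By Ehrhart--Macdonald reciprocity, $L^\circ_\sigma(-t) = (-1)^{\dim \sigma} L_\sigma(t)$ where $L_\sigma$ is the closed Ehrhart polynomial; since $L_\sigma(0) = 1$, we obtain $L^\circ_\sigma(0) = (-1)^{\dim \sigma}$, and consequently
$$P_\mathcal{T}(0) = \sum_{\sigma \in \mathcal{T}} (-1)^{\dim \sigma} = \chi(\mathcal{T}).$$

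Next, because $P_\mathcal{T}$ is an integer-valued polynomial of degree $\le d$, it admits a unique expansion in the binomial basis
$$P_\mathcal{T}(t) = \sum_{k=0}^{d} a_k \binom{t}{k}, \qquad a_k \in \mathbb{Z},$$
with $a_0 = P_\mathcal{T}(0) = \chi(\mathcal{T})$. To annihilate the higher coefficients modulo $n$, I would set $t := n \cdot \mathrm{lcm}(1,2,\ldots,d)$; then for every $1 \le k \le d$ the quotient $t/k$ is a multiple of $n$, so
$$\binom{t}{k} = \frac{t}{k}\binom{t-1}{k-1} \equiv 0 \pmod{n},$$
and hence $P_\mathcal{T}(t) \equiv a_0 = \chi(\mathcal{T}) \pmod{n}$. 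This recipe recovers $t=2$ for segments ($d=1, n=2$) and $t=4$ for planar polygons ($d=2, n=2$), matching the two motivating examples in the introduction.

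The main conceptual input is Ehrhart reciprocity, applied uniformly across lattice simplices of all dimensions, so that lower-dimensional faces sitting in proper affine lattice subspaces of $\mathbb{Z}^d$ are handled via their intrinsic lattices. One can even sidestep reciprocity altogether: M\"obius inversion on the face poset yields $L^\circ_\sigma = \sum_{\tau \preceq \sigma}(-1)^{\dim\sigma - \dim\tau} L_\tau$, and evaluating at $t=0$ reduces the identity $L^\circ_\sigma(0) = (-1)^{\dim\sigma}$ to the elementary binomial computation $\sum_{j=0}^{k}(-1)^{k-j}\binom{k+1}{j+1} = (-1)^k$, so only bare Ehrhart polynomiality is strictly required.
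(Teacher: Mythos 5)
Your proof is correct, but it takes a genuinely different route from the paper's. The paper first proves a lemma for a single convex lattice polytope: writing its Ehrhart polynomial in the Stanley form $\binom{t+d}{d}+h_1\binom{t+d-1}{d}+\cdots+h_d\binom{t}{d}$ with integer $h_i$, it uses Kummer's theorem to show that at $t=p^k$ with $k>[\log_p d]$ all terms but the first vanish modulo $p^{k-[\log_p d]}$, then assembles $t=\prod_{p_i\mid n}p_i^{\alpha_i+[\log_{p_i}d]}$ via the Chinese remainder theorem so that every closed simplex of $t\mathcal{T}$ contains exactly one lattice point mod $n$, and finally passes to the complex by appealing to additivity of both the point count mod $n$ and the Euler characteristic. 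You instead treat the counting function of the whole complex at once through the disjoint decomposition into relatively open faces, identify its constant term as $\chi(\mathcal{T})$ by Ehrhart--Macdonald reciprocity, and kill the nonconstant part by choosing $t$ with $\binom{t}{k}\equiv 0\pmod n$ for $1\le k\le d$. This buys an argument needing neither the $h^*$-expansion, nor Kummer's theorem, nor the CRT, and it makes fully explicit the reduction from complexes to simplices that the paper leaves terse in its ``additivity'' step. The trade-off is the size of the dilation: your $t=n\cdot\mathrm{lcm}(1,\dots,d)$ involves every prime up to $d$, while the paper's $t$ involves only primes dividing $n$ and hence divides yours (for $d=3$, $n=2$ the paper gets $t=4$ against your $t=12$); both suffice for the existence claim. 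One small caveat on your closing remark: the M\"obius-inversion shortcut still needs $L_\tau(0)=1$ for closed lattice simplices, so it does not quite reduce everything to ``bare polynomiality,'' though that constant-term fact is immediate from the $h^*$-form for a simplex.
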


 Here $\chi(\mathcal{T})$ is the Euler characteristic of the complex $\mathcal{T}$ and $t \mathcal{T}$ denotes the image of~$\mathcal{T}$ under similarity with the center at the origin and ratio equal to $t$.

The proof is based on Stanley's theorem on the coefficients of Ehrhart polynomials \cite{stanley1980decompositions}. 
Let us recall definition of Ehrhart polynomial \cite{ehrhart1962polyedres}. A polytope is called a \textit{lattice polytope} if all the vertices lie on $\mathbb{Z}^d$. For any $d$-dimensional lattice polytope $\mathcal{P}$ in $\mathbb{R}^d$, there exists a polynomial
\begin{equation}
	L(\mathcal{P}, t)=a_dt^d + a_{d-1}t^{d-1} +\cdots + a_0,
\end{equation}
such that the number of lattice points in the polytope $t \mathcal{P}$ is equal to $L(\mathcal{P}, t)$.
It is possible to prove that $a_0$ is the Euler characteristic of
$\mathcal{P}$ (that is one for convex polytopes) and $a_d$ is the volume of $\mathcal{P}$.
Further important properties of Ehrhart polynomial and its connection with number theory, combinatorics and discrete geometry could be found in \cite{beck2006computing}.

\section{Proof}

 First we prove the following lemma. Here $\left[\cdot \right]$ is the floor function, that is, $\left[x \right]$ denotes the largest integer number not greater than $x$.

\begin{lemma}
	\label{lem:convex lemma}
	Let $\mathcal{P}$ be a convex polytope in $\mathbb{R}^d$ with vertices in the integer lattice $\mathbb{Z}^d$, $p$ be any prime number and $l=\left[\log_p{d}\right]$.
	Then for any natural number $k>l$, the number of lattice points in the convex polytope $p^k \mathcal{P}$ is exactly one modulo ${p^{k-l}}$.
\end{lemma}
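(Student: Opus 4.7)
My plan is to reduce the statement to a $p$-adic valuation estimate on binomial coefficients via Ehrhart's theorem. Recall that for a lattice polytope $\mathcal{P}\subset\mathbb{R}^d$, the counting function $L_{\mathcal{P}}(t):=|t\mathcal{P}\cap\mathbb{Z}^d|$ agrees on positive integers with a polynomial of degree $\dim\mathcal{P}\le d$, whose value at $t=0$ equals the Euler characteristic of $\mathcal{P}$, namely $1$. Since $L_{\mathcal{P}}$ takes integer values on all non-negative integers, it can be expanded uniquely in the integer-valued basis of binomial coefficients:
$$L_{\mathcal{P}}(t)=\sum_{i=0}^{d}c_i\binom{t}{i},\qquad c_i\in\mathbb{Z},\quad c_0=L_{\mathcal{P}}(0)=1.$$

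Next I substitute $t=p^k$ and compute the $p$-adic valuation of each term. For $1\le i<p^k$ one has $v_p(p^k-j)=v_p(j)$ whenever $1\le j<p^k$ by the non-archimedean property, so for $1\le i\le d$
$$v_p\!\binom{p^k}{i}=v_p\bigl(p^k(p^k-1)\cdots(p^k-i+1)\bigr)-v_p(i!)=k+v_p((i-1)!)-v_p(i!)=k-v_p(i).$$
Now the bound $p^l\le d<p^{l+1}$ forces $v_p(i)\le l$ for every $i\in\{1,\dots,d\}$, hence $\binom{p^k}{i}\equiv 0\pmod{p^{k-l}}$.

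Combining the two ingredients,
$$L_{\mathcal{P}}(p^k)=1+\sum_{i=1}^{d}c_i\binom{p^k}{i}\equiv 1\pmod{p^{k-l}},$$
which is exactly the claim. There is no serious obstacle once Ehrhart's theorem is invoked; the only subtle point worth emphasizing is the passage to the binomial basis with \emph{integer} coefficients, which requires not just that $L_{\mathcal{P}}$ be integer-valued on positive integers but also that $L_{\mathcal{P}}(0)=1\in\mathbb{Z}$. All remaining work is the routine valuation computation above.
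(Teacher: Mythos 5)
Your proof is correct, and it takes a genuinely different route from the paper. The paper invokes Stanley's nonnegativity theorem to write $L_{\mathcal{P}}(t)$ in the basis $\binom{t+d-i}{d}$ with integer coefficients $h_i$, then checks $\binom{p^k+d}{d}\equiv 1$ and $\binom{p^k+d-i}{d}\equiv 0 \pmod{p^{k-l}}$ using Kummer's theorem; you instead expand $L_{\mathcal{P}}(t)$ in the basis $\binom{t}{i}$, which requires only the elementary fact that a degree-$\le d$ polynomial taking integer values at $t=0,1,\dots,d$ has integer finite differences $c_i=\Delta^i L_{\mathcal{P}}(0)$, together with the standard Ehrhart fact $L_{\mathcal{P}}(0)=1$. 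Your valuation computation $v_p\binom{p^k}{i}=k-v_p(i)\ge k-l$ for $1\le i\le d<p^{l+1}$ is clean and correct (the ultrametric identity $v_p(p^k-j)=v_p(j)$ for $1\le j<p^k$ does the work that Kummer's theorem does in the paper). What your approach buys is independence from Stanley's theorem: you never need nonnegativity, nor even the integrality of the $h^*$-coefficients, only integer-valuedness of the Ehrhart polynomial and its constant term --- a strictly weaker input. You were right to flag the one subtle point, namely that $L_{\mathcal{P}}(0)=1$ is needed both to get $c_0=1$ and to ensure all the $c_i$ are integers; with that made explicit, the argument is complete.
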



\begin{proof}
	From Stanley's nonnegativity theorem (more precisely Lemma 3.14 in \cite{beck2006computing}) it follows that in this case the number of lattice points in the convex polytope~$t \mathcal{P}$ equals exactly:
\begin{equation}
	\label{eq:stanley represent}
		{t+d \choose d}+h_1 {t+d-1 \choose d}+\dots +h_{d-1}{t+1 \choose d}+h_d{t\choose d},
\end{equation}
where $h_1$, $h_2$, \dots, $h_d$ are nonnegative integer numbers.
	
	Suppose $t=p^k$ and $m\le d \le p^{l+1}-1$.
	If $\alpha$ is the maximal power of $p$ which divides~$m$ then $(m+p^k)/p^\alpha \equiv m/p^\alpha \pmod {p^{k-l}}$.
	Using this fact it is easy to show that ${t+d \choose d}\equiv 1 \pmod {p^{k-l}}$.
	Also from Kummer's theorem (see \cite{graham1988concrete}, exercise 5.36) it follows that for any $i=1, 2, \dots, d$  we have ${t+d -i\choose d}\equiv 0 \pmod {p^{k-l}}$.
	So as we can see, the number of lattice points equals exactly one modulo ${p^{k-l}}$.
	\end{proof}

	\begin{remark}
		\label{re:apk}
		It is easy to see that the statement of Lemma \ref{lem:convex lemma} holds for dilation factor $ap^k$, $a \in \mathbb{N}$. For proof it is sufficient to apply the Lemma to the polytope $a \mathcal{P}$.
	\end{remark}


\begin{proof}[Proof of Theorem \ref{thm:maintheorem}]
	Consider the prime factorization of $n$:
	$$
	n=p_1^{\alpha_1}p_2^{\alpha_2}p_3^{\alpha_3}\dots p_s^{\alpha_s}.
	$$
	
	Let $\beta_i=\alpha_i+\left[\log_{p_i}{d}\right]$.
	Define
	$
	t=p_1^{\beta_1}p_2^{\beta_2}p_3^{\beta_3}\dots p_s^{\beta_s}.
	$
	Suppose $\Delta$ is a simplex. By Lemma \ref{lem:convex lemma} we have that the number of lattice points in $t \Delta$ equals $1$ modulo $p_i^{\alpha_i}$ for any $i=1,2,\dots,s$. From the Chinese remainder theorem, it follows that this number is equivalent to $1$ modulo~$n$.
	
		We know that the Euler characteristic of every simplex (with its interior) equals~$1$ and the Euler characteristic is an additive function on simplicial complexes. Since the number of lattice points modulo $n$ is also an additive function, we obtain that the number of lattice points is equivalent to exactly $\chi(\mathcal{T}) \pmod{n}$.
\end{proof}

\begin{remark}
	\label{re:minimal} As noted by the anonymous referee the statement of Theorem \ref{thm:maintheorem} is kind of obvious for $t=nd!$.
	It is well-known that for any $d$-dimensional lattice polytope, 
all the coefficients of the Ehrhart polynomial are rational numbers and all the denominators except for the constant term $1$ are divisors of $d!$.
	In other words, the polynomial is of the form
	     $$L(\mathcal{P}, t)=1+t\cdot p(t)/d!$$
	where the polynomial $p(t)$ has integer coefficients. So if $t=n\cdot d!$ then
$$
L(\mathcal{P}, nd!)=1+n\cdot p(n\cdot d!)
$$
	which is $1$ modulo $n$.
	
	Let us show that the number $t$ obtained in the proof of Theorem \ref{thm:maintheorem} is the minimal natural number which satisfies the condition of the Theorem.  
	
	Suppose $t$ is not divisible by $p_i^{\beta_i}$ for some $i$. Let $d'=p_{i}^{\left[\log_{p_i}{d}\right]}$ and $\Delta$ be a $d'$-dimensional simplex with vertices $(0, 0, \dots, 0)$, $(1, 0, \dots, 0)$, \dots, $(0, 0, \dots, 1)$.
	Then the number of lattice points in the simplex $x \Delta$ is equal to $x+d' \choose d'$ (see \cite{beck2006computing} section 2.3).
	It is easy to see that one can choose $x$ such that $t\cdot x\equiv p_i^{\beta_i-1} \pmod {p_i^{\beta_i}}$. Note that if $a \equiv b \pmod {p_i^{\beta_i}}$, then 

$$
	{a +k \choose k} \equiv {b+k \choose k} \pmod  {p_i^{\alpha_i}},
	\text{ for all } k<p_i^{\beta_i-\alpha_i}=d'.
$$

	Since ${p_i^{\beta_i-1}+d'-1 \choose d'-1}\equiv 1 \pmod {p_i^{\alpha_i}}$, we have
		\begin{multline}
	L(x \Delta, t)={xt +d' \choose d'}\equiv
	{p_i^{\beta_i-1}+d' \choose d'}	=
	\\
	={p_i^{\beta_i-1}+d' -1 \choose d' -1}\cdot \frac {p_i^{\beta_i-1}+d'}{d'} \equiv p_i^{\alpha_i-1}+1 \pmod {p_i^{\alpha_i}}.
	\end{multline}

\end{remark}

{\bf Acknowledgment.} We wish to thank Rom Pinchasi and Oleg Musin for helpful discussions and remarks. We are grateful to the anonymous referee of the journal Integers for his valuable comments.


\end{document}